\renewcommand\section{\@startsection {section}{1}{\z@}
{-30pt \@plus -1ex \@minus -.2ex}
{2.3ex \@plus.2ex}
{\normalfont\normalsize\bfseries\boldmath}}
\renewcommand\subsection{\@startsection{subsection}{2}{\z@}
{-3.25ex\@plus -1ex \@minus -.2ex}
{1.5ex \@plus .2ex}
{\normalfont\normalsize\bfseries\boldmath}}
\renewcommand{\@seccntformat}[1]{\csname the#1\endcsname. }
\newtheorem{theorem}{Theorem}
\newtheorem{corollary}{Corollary}
\theoremstyle{definition}
\newtheorem{conjecture}{Conjecture}
\begin{document}
\newcommand{\rad}{\textrm{rad}}
\def\uplim{{700}}

\begin{center}
\uppercase{\bf No new Goormaghtigh primes up to $10^{\uplim}$}
\vskip 20pt
{\bf Jon Grantham}\\ 
{\small Institute for Defense Analyses/Center for Computing Sciences, Bowie, Maryland}\\
{\tt grantham@super.org}\\ 
\end{center}
\vskip 20pt


\centerline{\bf Abstract}
\noindent
The Goormaghtigh conjecture states that the only two numbers which have
two non-trivial representations as repunits are $31$ and $8191$. We call 
such a prime number a {\it Goormaghtigh prime}.
We show that there are no other Goormaghtigh primes less than $10^{\uplim}$.

\pagestyle{myheadings}
\thispagestyle{empty}
\baselineskip=12.875pt
\vskip 30pt

\section{Introduction}

Recall that a repunit is a positive integer whose digital representation in some base $b$ consists only of $1$s. Goormaghtigh \cite{Goormaghtigh} observed that the only numbers up to $10^5$ with
two non-trivial representations as repunits are $31$ (bases $2$ and $5$) and $8191$ (bases $2$ and $90$). The conjecture
that these are the only two such numbers has become known as the ``Goormaghtigh
Conjecture.'' More precisely, 
\begin{conjecture}
The only solutions to 
\begin{equation}
\label{eq:goorm}
N=\frac{x^m-1}{x-1}=\frac{y^n-1}{y-1}
\end{equation} with
integers $y>x\ge 2$, $n>m>2$ are $N=31$ and $N=8191$.
\end{conjecture}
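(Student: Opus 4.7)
The plan is to attack equation (\ref{eq:goorm}) by combining transcendence-theoretic upper bounds on the solutions with a finite computational search, following the broad strategy that has resolved related exponential Diophantine equations such as Catalan's conjecture. The overall architecture is: produce an effective upper bound on $\max(x,y,m,n)$ via linear forms in logarithms, then exhaust what remains by reduction and sieving.

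First, I would derive an archimedean linear form in logarithms. Rewriting (\ref{eq:goorm}) as $(x^m-1)(y-1) = (y^n-1)(x-1)$ and taking logarithms yields
\[
\Lambda := m\log x - n\log y - \log\frac{x-1}{y-1} = O(x^{-m} + y^{-n}).
\]
Applying a sharp lower bound for a linear form in three logarithms (Matveev, Laurent--Mignotte--Nesterenko, or Baker--W\"ustholz) yields an estimate of the shape $|\Lambda| \gg \exp(-C\log x \log y \log\max(m,n))$ for an explicit $C$. Comparing the two inequalities bounds $\max(m,n)$ polynomially in $\log\max(x,y)$.

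Second, I would introduce $p$-adic linear forms. For each prime $p$ dividing $x-1$ (resp.\ $y-1$), the $p$-adic valuation of $N$ can be analyzed on both sides of (\ref{eq:goorm}) via the lifting-the-exponent lemma, producing a $p$-adic linear form in logarithms whose smallness is bounded below by Yu's theorem. Combining the archimedean and $p$-adic bounds should give an effective bound on all four variables $x,y,m,n$. For the residual finite search region, LLL-based reduction \`a la Baker--Davenport, together with congruence sieving modulo carefully chosen auxiliary primes, would in principle eliminate every candidate tuple $(x,y,m,n)$ except those giving $N=31$ and $N=8191$.

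The central obstacle---and the reason the conjecture has remained open since 1917---is that current lower bounds for linear forms in logarithms are orders of magnitude too weak relative to the upper bounds extractable from (\ref{eq:goorm}) to reduce the search to anything computationally feasible. Existing unconditional results treat only restricted regimes: $\gcd(m-1,n-1) > 1$ (He--Togb\'e), the case $m = 3$ via hypergeometric approximations (Nesterenko--Shorey), or cases where one of $x,y$ is fixed. A full proof would likely require either a substantial strengthening of Baker's theory beyond anything presently available, or a fundamentally new idea exploiting the arithmetic geometry of the high-genus family of curves defined by (\ref{eq:goorm}); absent such progress, the best one can currently hope for is precisely the kind of bounded computational verification carried out in this paper.
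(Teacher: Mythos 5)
The statement you were asked about is a \emph{conjecture}: the paper does not prove it, and it has been open since 1917. The paper's actual contributions are (a) a computational verification that no Goormaghtigh \emph{primes} other than $31$ and $8191$ exist below $10^{700}$, relying on the Bennett--Garbuz--Marten and Bennett--Gherga--Kreso results to restrict the exponents, and (b) a conditional finiteness statement under the abc conjecture. So the honest benchmark for your proposal is not ``does it match the paper's proof'' but ``does it constitute a proof at all'' --- and it does not, as you yourself concede in your final paragraph.

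The concrete gap is in your first two steps. From $\Lambda = m\log x - n\log y - \log\frac{x-1}{y-1} = O(x^{-m}+y^{-n})$, Matveev-type bounds give $|\Lambda| > \exp(-C\log x\log y\log\max(m,n))$, and comparing the two yields only $m \ll \log y \log n$ (up to constants): this bounds the exponents \emph{in terms of} $\log x$ and $\log y$, but leaves $x$ and $y$ completely free, because the lower bound for $\Lambda$ degrades with the heights of the algebraic numbers $x$, $y$, and $(x-1)/(y-1)$, all of which are unbounded here. The $p$-adic step has the same defect: Yu's theorem also carries factors of $\log x\log y$, so the archimedean and $p$-adic inequalities never close up into a bound on all four variables. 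This is precisely why the problem is not a routine application of Baker's method --- unlike Catalan's equation, where Tijdeman could exploit the rigid structure $x^p - y^q = 1$ to bootstrap bounds on the bases from bounds on the exponents, equation \eqref{eq:goorm} has two free bases and an extra logarithmic term, and no analogous bootstrap is known. Your proposal is a reasonable survey of the standard toolkit and a fair diagnosis of why it fails, but it contains no new idea that overcomes this obstruction, so it is a research program rather than a proof. (Minor point: the $\gcd(m-1,n-1)>1$ results you want are due to Balasubramanian--Shorey and, in the effective small-parameter range used by this paper, to Bennett--Garbuz--Marten; the attribution to He--Togb\'e is off.)
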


We exclude the case $m=2$ in Conjecture 1 because every integer $N$ is a length-$2$ repunit in base $N-1$.
It is currently unknown whether the equation has finitely many solutions. Some results are known for small exponents and the case $\gcd(m-1,n-1)>1$, which we will use below.
Note that $31$ and $8191$ are both primes. We propose the terms {\it Goormaghtigh
primes} for such repunits $N$ that are primes and {\it Goormaghtigh numbers}
for any $N$ with two representations.
Prime repunits have been also studied as Brazilian primes; see \cite{brazil}. A Goormaghtigh prime can also be characterized as a Brazilian prime to two different
bases.
Bateman and Stemmler \cite{MR0138616}, using computations of Horn, noted that the only $N$ less than $1.275\times 10^{10}$ satisfying \eqref{eq:goorm} with $x$, $y$ and $N$ all prime is $31$.
In this paper, we look at a condition weaker than in the Goormaghtigh conjecture,
but stronger than in the Bateman-Stemmler question, by searching for Goormaghtigh
primes.
We use recent results of Bennett, Garbuz, and Martens \cite{bgm} as a key ingredient to reduce the computation.

\section{There Are Only Two Goormaghtigh Primes Less Than $10^{\uplim}$}

Our approach is to use lower bounds on $m$, $n$, and $y$ in
\eqref{eq:goorm} to significantly limit the number of cases less than
$10^\uplim$ that we need to check.

The following is part of Theorem 4 from \cite{bgm}.
\begin{theorem}
\label{thm:bgm}
The only solutions to Equation \eqref{eq:goorm} with $\gcd(m-1,n-1)>1$ and $m\le 50$ have $N=31$ or $N=8191$.
\end{theorem}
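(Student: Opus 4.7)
The plan is to exploit the hypothesis $d := \gcd(m-1, n-1) > 1$ by writing $m-1 = da$ and $n-1 = db$, thereby recasting \eqref{eq:goorm} as a polynomial identity amenable to explicit methods. The telescoping identity
\[
\frac{x^{1+dk}-1}{x-1} \;=\; 1 \;+\; x\cdot\frac{x^d-1}{x-1}\cdot\frac{(x^d)^k-1}{x^d-1}
\]
and its analogue for $y$ convert \eqref{eq:goorm} into
\[
x\,\phi_d(x)\,Q_a(x^d) \;=\; y\,\phi_d(y)\,Q_b(y^d),
\]
with $\phi_d(X)=1+X+\cdots+X^{d-1}$ and $Q_k(U)=1+U+\cdots+U^{k-1}$. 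For each fixed triple $(m,n,d)$ this is a single polynomial equation $f_{m,d}(x)=g_{n,d}(y)$, so the theorem reduces to a finite family of such equations indexed by the pair $(m,d)$, with $n$ as a free exponent.

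For each of the few dozen pairs $(m,d)$ with $3\le m\le 50$ and $1<d\mid(m-1)$, I would first derive an explicit upper bound $N_0(m,d)$ on $n$. The natural tool is Baker's theorem on linear forms in logarithms, applied to the equivalent form $(x-1)(y^n-1)=(y-1)(x^m-1)$ --- or directly to the linear form $m\log x - n\log y$, which must be small because both $y^{n-1}$ and $x^{m-1}$ are close to $N$. An alternative is an effective Bilu--Tichy-style finiteness theorem for $f(x)=g(y)$. Having bounded $n$, I would then resolve each residual curve $f_{m,d}=g_{n,d}$ individually: genus-$\ge 2$ curves by Mordell--Weil sieve or explicit Chabauty on the Jacobian, genus-$1$ curves by integral-point computations on the associated elliptic model, and the lowest-degree or most structured cases by congruence conditions modulo primes dividing $\phi_d$. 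The known solutions $N=31$ (from $m=3$, $d=2$, $x=5$, $y=2$, $n=5$) and $N=8191$ (from $m=3$, $d=2$, $x=90$, $y=2$, $n=13$) arise inside the $m=3$ family and must be recovered; the theorem asserts that no others appear.

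The main obstacle, I expect, is Step 1: bounds on $n$ coming from Baker's method are typically astronomical and would render the subsequent enumeration infeasible. Bringing $N_0(m,d)$ down to a size at which the residual curves can be resolved by modest computer search will likely require fine arithmetic input --- for instance, Zsigmondy-style control on common prime divisors of $\phi_d(x)$ and $y^d-1$, $p$-adic Newton-polygon estimates for the polynomial equation, or direct effective height bounds on integral points of the concrete curves involved --- so that the finitely many residual equations actually submit to an explicit resolution.
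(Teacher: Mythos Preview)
The paper does not prove this statement at all: it is quoted verbatim as ``part of Theorem~4 from~\cite{bgm}'' and used as a black box to restrict the subsequent computation. There is therefore no proof in the present paper for your proposal to be compared against.

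As a remark on the proposal itself: your rewriting via $d=\gcd(m-1,n-1)$ to obtain $x\,\phi_d(x)\,Q_a(x^d)=y\,\phi_d(y)\,Q_b(y^d)$ is correct and is indeed the natural first step. You have also correctly identified the real obstacle, namely that a bound on $n$ obtained from linear forms in logarithms is far too large to permit a case-by-case resolution of the residual curves; your suggested fallbacks (Bilu--Tichy finiteness, Chabauty on each curve) do not help here, since the number of curves to treat is itself controlled by that bound. The proof in~\cite{bgm} bypasses this by using the hypergeometric method (Pad\'e approximation to binomial functions in the style of Thue--Siegel), which for each fixed $m\le 50$ and each $d\mid m-1$ produces an explicit, computationally practical bound on $y$ (and hence on $n$), after which the remaining range is handled by direct search and local conditions. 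So your plan is sound in outline but is missing the one ingredient---the hypergeometric bound in place of Baker's---that makes the argument actually go through.
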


For Goormaghtigh primes, we must have $m$ and $n$ odd primes, and thus $m\ge 53$. 
Theorem 2 from Bennett, Gherga, and Kreso \cite{bgk} rules out further examples with $n=3$ or $n=5$ when $\gcd(m-1,n-1)>1$, which is true in the prime case. 
Theorem 3 from \cite{bgm} rules out further examples with $y\le 10^5$. 
In order to show that there are no Goormaghtigh primes below $10^{\uplim}$, we
employ the following algorithm. Let $f_k(x)=\frac{x^k-1}{x-1}$.
\begin{enumerate}
\item We perform a precomputation similar to the one in \cite{fgg}.
For a list of small primes $\{p_i\}$, we compute the values of $f_q(b)$ for all $2\le b < p_i$ and $1\le q < p_i$.
\item Generate a list of possible values of $m$. We know $m$ must be prime. From Theorem \ref{thm:bgm}, we have $m\ge 53$, and since $x>2$ in the range considered (no other known Mersenne primes are Goormaghtigh primes), $m< \log{10^{\uplim}}/\log{3}\approx 1467$.
\item For each $m$ in the above list, we examine all $x$ with $x>2$
and $x<10^{\uplim/m}$. If $x \not\equiv 1 \pmod {p_i}$, then
$f_m(x)\equiv f_{m'}(x)\pmod {p_i}$ for $m\equiv m'\pmod{p_i-1}$. We retrieve this value from the precomputation. Call this value $a_i$.
\item We examine each prime $n$ such that $m>n\ge 7$; we exclude the
case $3(m-1)=(n-1)$ by Theorem 6 of \cite{bgm}. For $n'\equiv n\pmod
{p_i-1}$, we see if $a_i$ is a possible value of $f_{n'}(y)$. To handle
the case $y\equiv 0\pmod {p_i}$, we check if $a_i\equiv 1\pmod {p_i}$.
Finally, if $y\equiv 1$, we have $f_n(y)\equiv n$, so we check if $a_i\equiv n$. If none of these conditions hold, we have shown there is no solution mod $p_i$, and thus over the integers. If there is a solution, we repeat with a different $p_i$.

\end{enumerate}

The computation up to $10^{500}$ took $129$ minutes on a single core of an Intel SP Platinum 8280 CPU running at 2.7 GHz. The computation up to $10^{\uplim}$ took approximately $480$ core-days.

Note that we have, in fact, shown a slightly stronger result, that there are no new Goormaghtigh numbers up to $10^{\uplim}$ where both exponents are prime.

\section{A Conditional Result}

Recall the abc conjecture.

\begin{conjecture}
The \textit{abc conjecture} of Oesterl\'e \cite{Oes} and Masser
\cite{Masser} states that if $a$, $b$, and $c$ are relatively prime
integers such that $a + b = c$, then for any $\epsilon >0$, only
finitely many $(a,b,c)$ fail to satisfy the inequality
\[ c < \rad(abc)^{1+\epsilon}.\]
\end{conjecture}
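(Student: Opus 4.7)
The plan is, candidly, to treat this as what it is: one of the central open problems of modern number theory, not something amenable to a short sketch. No proof of the full $abc$ conjecture is presently accepted by the mathematical community, so any honest proposal must outline a research program rather than a workable argument. I would begin by recasting the statement in one of its better-studied equivalent or near-equivalent forms, in the hope that those formulations expose more structure. The cleanest target is Szpiro's conjecture on conductors and minimal discriminants of elliptic curves over $\mathbb{Q}$: an effective Szpiro inequality of the shape $|\Delta_E| \ll N_E^{6+\epsilon}$ would yield $abc$ with a controllable exponent, via the Frey--Hellegouarch curve $y^2 = x(x-a)(x+b)$ associated to a putative triple $a+b=c$. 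So the first step is to reduce the target inequality $c < \rad(abc)^{1+\epsilon}$ to a height bound on this family of elliptic curves.

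Next, I would survey the two genuinely different attacks that have been developed. The first is the analytic/Diophantine route via Baker's theory of linear forms in logarithms, which is the engine behind every unconditional partial result in the direction of $abc$ (Stewart--Yu, and later refinements giving $\log c \ll \rad(abc)^{1/3+\epsilon}$). The task here would be to replace the exponent $1/3$ by something approaching $0$, which would require a qualitatively new lower bound on linear forms in many $p$-adic logarithms with power-saving dependence on the number of terms. The second route is geometric, through the theory of heights on moduli spaces and, most ambitiously, Mochizuki's inter-universal Teichm\"uller program; any proposal along this line would have to make precise the passage from the Hodge--Arakelov-theoretic inequalities to an effective Szpiro bound, which is exactly the step on which the community has not reached consensus.

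The main obstacle, in either direction, is essentially the same: all known unconditional techniques give a bound of the form $\log c \le \kappa \cdot \rad(abc)^{\alpha}$ for some $\alpha>0$, and closing the gap to $\alpha = 0$ requires an input (a sufficiently sharp subspace theorem, an effective Mordell with explicit constants uniform in the curve, or a fundamentally new height inequality) that is not currently available. I would therefore not expect this proposal to yield a proof; a realistic intermediate milestone would be an explicit, reasonable-exponent version such as $c \le \rad(abc)^{2}$ for all but finitely many triples, which would already suffice for many applications, including strengthening the conditional consequences for Goormaghtigh primes hinted at in the next section of this paper.
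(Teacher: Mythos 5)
This statement is the $abc$ conjecture itself, which the paper labels as a Conjecture precisely because it is an open problem: the paper offers no proof and only assumes it as a hypothesis for Theorem 2 and the Corollary. You were right to decline to prove it, and your survey of the standard reductions (Frey--Hellegouarch curves and Szpiro's conjecture, the Stewart--Yu bounds, the unresolved status of the IUT approach) is accurate, so there is no paper proof to compare against and no gap to report beyond the one you already acknowledge.
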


Carl Pomerance suggested an argument that gives the following.

\begin{theorem}
Assuming the abc conjecture, there are only finitely many Goormaghtigh numbers where neither representation is of length three or four.
\end{theorem}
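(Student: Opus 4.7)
The plan is to apply the abc conjecture to an identity extracted from the two repunit relations. Let $f_k(t) := (t^k-1)/(t-1)$; since $f_k(t)$ is strictly increasing in both $k$ and $t$, the two distinct representations $N = f_m(x) = f_n(y)$ force $x > y$ precisely when $m < n$, so after relabeling I assume $5 \leq m < n$ and $x > y \geq 2$. Eliminating $N$ from $(x-1)N = x^m-1$ and $(y-1)N = y^n-1$ yields the key identity
\[
(x-1)y^n - (y-1)x^m = x-y,
\]
whose right-hand side is minuscule compared to either term on the left (both of order $Nxy \sim y^{n+1}$)---exactly the cancellation pattern that makes abc bite.

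Assume first that $\gcd(x,y)=1$, and set $a = (y-1)x^m$, $b = x-y$, $c = (x-1)y^n$, so $a+b=c$. A quick inspection of each factor shows that any prime dividing all three of $a,b,c$ must divide both $x-1$ and $y-1$, so $g := \gcd(a,b,c)$ divides $\gcd(x-1,y-1) \leq y-1$. Passing to the primitive triple $(a/g, b/g, c/g)$ and invoking the abc conjecture with a fixed small $\epsilon > 0$ gives
\[
\frac{c}{g} \;\ll_\epsilon\; \rad(abc)^{1+\epsilon} \;\leq\; \bigl(xy(x-1)(y-1)(x-y)\bigr)^{1+\epsilon} \;\leq\; (x^3 y^2)^{1+\epsilon}.
\]
Since $x \geq y$, we have $c/g \geq (x-1)y^n/(y-1) \geq y^n$. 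Combining with the elementary estimate $x \leq y^{n/(m-1)}$ (which follows from $x^{m-1} < N \leq y^n$) rearranges to
\[
n\left(1 - \frac{3(1+\epsilon)}{m-1}\right) \;\leq\; 2(1+\epsilon) + O_\epsilon(1).
\]
For $m \geq 5$ and $\epsilon$ small, the left coefficient is bounded below by a positive absolute constant, so $n$ is bounded; combined with $m < n$, this forces the pair $(m,n)$ into a finite set.

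For each of the remaining finitely many pairs $(m,n)$, the equation $(x^m-1)/(x-1) = (y^n-1)/(y-1)$ defines a curve of genus at least $2$, so Faltings' theorem yields only finitely many integer solutions $(x,y)$, completing the argument. The case $\gcd(x,y) = d > 1$ is treated analogously by substituting $x = d x_1$, $y = d y_1$ with $\gcd(x_1,y_1) = 1$; the extra factors of $d^m$ and $d^n$ on the $a$- and $c$-sides only strengthen the resulting abc inequality.

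The main obstacle in this outline is the careful control of $g$: one needs the bound $g \leq y-1$ (coming from $g \mid \gcd(x-1,y-1)$) to survive, so that $c/g$ continues to dominate $y^n$ even after reducing to a primitive triple. A secondary technical point is verifying genus $\geq 2$ for each of the Goormaghtigh curves in the surviving finite list of $(m,n)$; this is standard for the high-degree plane curves arising when $m,n \geq 5$, but must be checked case-by-case.
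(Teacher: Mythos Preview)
Your argument and the paper's start from the same identity $(x-1)y^n - (y-1)x^m = x-y$ and the same abc triple, but diverge in how tightly the radical is controlled and hence in how the argument concludes. The paper keeps the factor $|x-y|/g$ in the radical bound by using $\rad(b)\le |b|=|x-y|/g$, obtaining $\rad(abc)\le xy(x-1)(y-1)|x-y|/g$; with $\epsilon=1/4$ this yields $c\ge \rad(abc)^{5/4}$ outright for every tuple with $m,n\ge 5$, so abc \emph{alone} forces finiteness---no appeal to Faltings, and no case split on $\gcd(x,y)$. Your cruder bound $\rad(abc)\le x^3y^2$ discards the $1/g$ saving, and so only bounds the exponents $m,n$; you then need Faltings for each of the finitely many surviving curves $f_m(x)=f_n(y)$. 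That is a legitimate route, but it imports an extra deep theorem and leaves the genus check as an acknowledged loose end (the curves are reducible---they always contain the diagonal component $x=y$ when $m=n$, and for $m\neq n$ one must verify irreducibility and genus $\ge 2$ of the relevant component, which is doable but not quite ``standard''). Your coprimality case split and the bound $g\mid\gcd(x-1,y-1)$ are correct and nicely argued, but the paper's bookkeeping makes both unnecessary.
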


\begin{proof}
We see that $y^{n+1}>x^m$, so we will use that $y^{(n+1)/m}>x$.
Rewrite \eqref{eq:goorm} as
$$(x^m-1)(y-1)=(y^n-1)(x-1).$$
Rearranging,
$$x^m(y-1)+(x-y)=y^n(x-1).$$
Let $$g=\gcd(x^m(y-1),x-y,y^n(x-1)).$$
Let $a=x^m(y-1)/g$, $b=(x-y)/g$ and $c=y^n(x-1)/g$.
Then we have $a+b=c$, and
$$\rad(abc)\le x(y-1)(y-x)y(x-1)/g<y^{3+(n+1)/m}(x-1)/g.$$
On the other hand, we have $c=y^n(x-1)/g$.
Let $\epsilon=1/4$. Then it suffices to show that for $n>m\ge 5$,
$\rad(abc)^{1+\epsilon}/c\le 1$,
or $$\left(y^{3+(n+1)/m}(x-1)/g\right)^{5/4}/(y^n(x-1)/g)\le 1.$$
This is equivalent to $$y^{15/4+(5/4)(n+1)/m-n}((x-1)/g)^{1/4}\le 1.$$ The exponent on $y$ is no greater than $-1/2$ (which is achieved when $m=5$, $n=6$). We have by necessity that $((x-1)/g)^{1/4}\le(x-1)^{1/4}<
(x-1)^{1/2}$, so we have established the conditions for the abc conjecture. Therefore there are only finitely many Goormaghtigh numbers
with $m<5$.
\end{proof}

Combining Theorem 2 with Theorem 2 from \cite{bgk}, which eliminates the
length-3 case for primes, gives the following.

\begin{corollary}
Assuming the abc conjecture, there are only finitely many Goormaghtigh primes.
\end{corollary}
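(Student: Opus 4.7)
The plan is to combine Theorem 2 with the unconditional length-$3$ result of \cite{bgk}, using the observation that primality of $N$ forces the exponents $m$ and $n$ to be prime and in particular to avoid the value $4$.

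First I would verify that if $N=(x^m-1)/(x-1)$ is prime with $x\ge 2$, then $m$ itself must be prime: any nontrivial factorization $m=ab$ gives
\[\frac{x^m-1}{x-1}=\frac{x^a-1}{x-1}\cdot\frac{x^{ab}-1}{x^a-1},\]
with both factors exceeding $1$. Applied to both representations of a Goormaghtigh prime, this yields odd primes $m$ and $n$ with $n>m\ge 3$. In particular neither exponent equals $4$, so the length-$4$ case permitted by Theorem 2 cannot arise in the prime setting and need not be treated separately.

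Next I would split on $m$. If $m\ge 5$, then $n>m\ge 5$, so neither representation has length $3$ or $4$, and Theorem 2 yields only finitely many such $N$ under the abc conjecture. If $m=3$, Theorem 2 of \cite{bgk} applies: the hypothesis $\gcd(m-1,n-1)>1$ becomes $\gcd(2,n-1)>1$, which holds automatically since $n$ is an odd prime, and the conclusion is again finiteness. The union of two finite sets is finite, giving the corollary.

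I expect no genuine obstacle. The heavy lifting is already done by Theorem 2 and by \cite{bgk}; what remains is essentially a case split together with the parity observation that links primality of $N$ to primality of its exponents, ruling out the length-$4$ possibility at no extra cost.
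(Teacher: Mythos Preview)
Your proposal is correct and follows exactly the route the paper takes: the paper's justification is the single sentence ``Combining Theorem 2 with Theorem 2 from \cite{bgk}, which eliminates the length-$3$ case for primes, gives the following,'' and your write-up simply makes explicit the two implicit points there---that primality of $N$ forces prime exponents (so length $4$ never occurs), and that the $\gcd(m-1,n-1)>1$ hypothesis of \cite{bgk} is automatic when $m=3$ and $n$ is an odd prime.
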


\vskip20pt\noindent{\bf Acknowledgements.} The author is grateful to Mike Bennett for helpful e-mail exchanges and providing copies of code, to Hester Graves for helpful conversations and checking the Mersenne primes, to Xander Faber and Mark Morgan for comments on an earlier draft of this paper, and to Gregory Minton for suggesting ruling out solutions locally.


\begin{thebibliography}{1}\footnotesize
\providecommand{\url}[1]{\texttt{#1}}
\providecommand{\urlprefix}{URL }

\bibitem{MR0138616}
P.~T. Bateman and R.~M. Stemmler, Waring's problem for algebraic number fields
  and primes of the form {$(p^{r}-1)/(p^{d}-1)$}, \textit{Illinois J. Math.}
  \textbf{6} (1962), 142--156.

\bibitem{bgm}
M.~A. Bennett, B.~Garbuz, and A.~Marten, Goormaghtigh's equation: small
  parameters, \textit{Publ. Math. Debrecen} \textbf{96} (2020).

\bibitem{bgk}
M.~A. Bennett, A.~Gherga, and D.~Kreso, An old and new approach to
  {G}oormaghtigh's equation, \textit{Trans. Amer. Math. Soc.} \textbf{373}
  (2020)(8), 5707--5745.

\bibitem{fgg}
M.~Filaseta, J.~Grantham, and H.~Graves, Only finitely many $s$-{C}ullen
  numbers are repunits for a fixed $s\ge 2$, preprint,
  \url{https://arxiv.org/abs/2112.04935}.

\bibitem{Goormaghtigh}
R.~Goormaghtigh, \textit{L'Interm\'ediarie de Math\'ematiciens}  (1917), 88.

\bibitem{Masser}
D.~W. Masser, Abcological anecdotes, \textit{Mathematika} \textbf{63} (2017),
  713--714.

\bibitem{Oes}
J.~Oesterl\'{e}, Nouvelles approches du ``th\'{e}or\`eme'' de {F}ermat,
  \textit{Ast\'{e}risque} \textbf{161-162} (1988), 165--186, s\'{e}minaire
  Bourbaki, exp. No. 694.

\bibitem{brazil}
B.~Schott, Les nombres br\'esiliens, \textit{Quadrature}  (2010)(76), 30--38.

\end{thebibliography}
\end{document}